\newcommand{\eqref}[1]{(\ref{#1})}
\newtheorem{theorem}{Theorem}
\newtheorem{lemma}[theorem]{Lemma}
\begin{document}
\begin{frontmatter}

\title{Parisian ruin probability for spectrally negative L\'evy processes}
\runtitle{Parisian ruin probability for spectrally negative L\'evy processes}

\begin{aug}
\author[1]{\fnms{Ronnie} \snm{Loeffen}\thanksref{1}\ead[label=e1]{loeffen@wias-berlin.de}},
\author[2]{\fnms{Irmina} \snm{Czarna}\thanksref{2,e2}\ead[label=e2,mark]{czarna@math.uni.wroc.pl}}
\and
\author[2]{\fnms{Zbigniew} \snm{Palmowski}\corref{}\thanksref{2,e3}\ead[label=e3,mark]{zbigniew.palmowski@gmail.com}}
\runauthor{R. Loeffen, I. Czarna and Z. Palmowski} 
\address[1]{Weierstrass Institute, Mohrenstrasse 39, 10117 Berlin,
Germany. \printead{e1}}
\address[2]{Department of Mathematics, University of Wroc\l aw, pl.
Grunwaldzki 2/4, 50-384 Wroc\l aw, Poland.
\printead{e2,e3}}
\end{aug}

\received{\smonth{2} \syear{2011}}
\revised{\smonth{8} \syear{2011}}

%
\begin{abstract}
In this note we give, for a spectrally negative L\'evy process, a
compact formula for the Parisian ruin probability, which is defined by
the probability that the process exhibits an excursion below zero, with
a length that exceeds a certain fixed period $r$. The formula involves
only the scale function of the spectrally negative L\'evy process and
the distribution of the process at time $r$.
\end{abstract}

%
\begin{keyword}
\kwd{L\'{e}vy process}
\kwd{Parisian ruin}
\kwd{risk process}
\kwd{ruin probability}
\end{keyword}

\end{frontmatter}

\section{Introduction}

Let $X=\{X_t,t\geq0\}$ be a spectrally negative L\'evy process on the
filtered probability space $(\Omega,\mathcal{F},\{\mathcal{F}_t\dvt t\geq0\}
, \mathbb P)$; that is, $X$ is a stochastic process issued from the
origin which has stationary and independent increments and cadlag paths
that have no positive jump discontinuities. To avoid degenerate cases,
we exclude the case where $X$ has monotone paths. As a strong Markov
process, we shall endow $X$ with probabilities $\{\mathbb P_x\dvt x\in
\mathbb R\}$, such that under $\mathbb P_x$, we have $X_0=x$ with
probability one. Further, $\mathbb E_x$ denotes expectation with
respect to $\mathbb P_x$. Recall that $\mathbb P=\mathbb P_0$ and
$\mathbb E=\mathbb E_0$. For background on spectrally negative L\'evy
processes, we refer to Section 8 of~\cite{kypbook}.

In this paper we deal with the quantity $\kappa_r$ with $r>0$, which is
defined by
\[
\kappa_r=\inf\{ t>r\dvt  t-g_t>r \}, \qquad \mbox{where $g_t=\sup\{0\leq
s\leq t\dvt X_s\geq0\}$.}
\]
Hereby we make the convention that $\inf\varnothing=\infty$ and $\sup
\varnothing=0$.
The stopping time $\kappa_r$ is the first time the process has stayed
below zero for a consecutive period of length greater than $r$,
and here we are interested in the probability that such an excursion
occurs. The number $r$ is referred to as the delay.
Such \textit{Parisian} stopping times have been studied by Chesney \textit{et al.}~\cite{chesney} in the context of barrier options in
mathematical finance. Dassios and Wu~\cite{dassioswusemi} introduced $\kappa
_r$ in actuarial risk theory:  the process $X$ under $\mathbb P_x$ with
$x\geq0$ is then used as a model for the surplus process of an
insurance company with initial capital $x$, and the company is said to
be Parisian ruined if an excursion as described above occurs. We
therefore call $\kappa_r$ the Parisian ruin time and the probability of
the event $\{\kappa_r<\infty\}$ under $\mathbb P_x$ the Parisian ruin
probability. Note that in risk theory, the classical term ruin is
referred to as the event that the surplus process reaches a strictly
negative level.

In another paper Dassios and Wu~\cite{dassioswu} gave a formula for the
Parisian ruin probability and, more generally, the Laplace transform of
the distribution of the Parisian ruin time in the case where $X$ is a
Brownian motion plus drift and in the case where $X$ is the classical
compound Poisson risk process with exponentially distributed claims.
Recently, Czarna and Palmowski~\cite{czarnapalmowski} gave a
description of the Parisian ruin probability for a general spectrally
negative L\'evy process; hereby they split the analysis into two cases:
one in which the underlying L\'evy process has paths of bounded
variation and one in which the process has paths of unbounded
variation. In particular, their result allowed them to reproduce the
formulas of Dassios and Wu~\cite{dassioswu} in the two aforementioned
cases, and to tackle also the case where $X$ is the classical compound
Poisson risk process perturbed by Brownian motion and with
exponentially distributed claims.

Another relevant paper is the one of Landriault \textit{et al.} \cite
{landriault}. Here the authors study, for a spectrally negative L\'evy
process of bounded variation, a somewhat different type of Parisian
stopping time, in which, loosely speaking, the deterministic, fixed
delay $r$ is replaced by an independent exponential random variable
with a fixed parameter $p>0$. To be a little bit more precise, each
time the process starts a new excursion below zero, a new independent
exponential random variable with parameter $p$ is considered, and the
stopping time of interest, let us denote it by $\kappa_{\exp(p)}$, is
defined as the first time when the length of the excursion is bigger
than the value of the accompanying exponential random variable.
Although in insurance the stopping time $\kappa_{\exp(p)}$ is arguably
less interesting than $\kappa_r$; working with exponentially
distributed delays allowed the authors to obtain relatively simple
expressions, for example, the Laplace transform of $\kappa_{\exp(p)}$
in terms of the so-called ($q$-)scale functions of $X$. In order to
avoid a misunderstanding, we emphasize that, in the definition of
$\kappa_{\exp(p)}$, by~\cite{landriault}, there is not a single
underlying exponential random variable, but a whole sequence (each
attached to a separate excursion below zero); therefore $\mathbb
P_x(\kappa_{\exp(p)}\in\mathrm{d}z)$ does not equal $\int_0^\infty
p\mathrm{e}^{-pr} \mathbb P_x(\kappa_r\in\mathrm{d}z)$.

As the main result of our paper, we provide an expression for the
Parisian ruin probability, which is considerably simpler than the one
of Czarna and Palmowski~\cite{czarnapalmowski}, and which
simultaneously holds for spectrally negative L\'evy processes of
bounded and unbounded variation.
Before stating this result, we introduce a little extra notation.

The Laplace exponent of $X$ is denoted by $\psi(\theta)$, that is,
\[
\psi(\theta)=\log\mathbb E [ \mathrm{e}^{\theta X_1} ],
\]
which is well defined for $\theta\geq0$. We further introduce the scale
function $W$ of $X$ (cf.~\cite{kypbook}, Section~8), which is the
strictly increasing, continuous function uniquely defined on $[0,\infty
)$ through its Laplace transform which is given by
%
\begin{equation}
\label{scalefunction}
\int_0^\infty\mathrm{e}^{-\theta x}W(x)\,\mathrm{d}x = \frac{1}{\psi
(\theta)},\qquad  \theta>0.
\end{equation}
We extend $W$ to the whole real line by setting $W(x)=0$ for
$x<0$.\vadjust{\goodbreak}

We now give the promised expression for the Parisian ruin probability.
Hereby we assume that~$X$ drifts to infinity (equivalently, $\mathbb
E[X_1]>0$) as otherwise Parisian ruin happens with probability one.
Recall that $r$ stands for a strictly positive real number.
\begin{theorem}\label{mainresult}
Assume $\mathbb E[X_1]>0$. Then for any $x\in\mathbb R$,
%
\begin{equation}
\label{newformula}
\mathbb{P}_x(\kappa_r<\infty) =
1 - \mathbb E[X_1]\frac{\int_0^\infty W(x+z) z\mathbb{P}(X_r\in\mathrm
{d}z) }{ \int_0^\infty z\mathbb{P}(X_r\in\mathrm{d}z)}.
\end{equation}
\end{theorem}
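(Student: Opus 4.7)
The plan is to prove \eqref{newformula} by first deriving a renewal equation via the strong Markov property and then identifying the resulting expression with the RHS through a key identity linking scale-function integrals to the distribution of $X_r$.

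First, let $q(x) := \mathbb{P}_x(\kappa_r = \infty)$, $\tau_0^- := \inf\{t>0 : X_t < 0\}$, and let $\rho$ denote the end of the first excursion below zero (with $X_\rho = 0$ on $\{\tau_0^-<\infty\}$ since $X$ has no positive jumps). Splitting on $\{\tau_0^- = \infty\}$ versus $\{\tau_0^- < \infty\}$ and applying the strong Markov property at $\rho$ (whereupon Parisian survival restarts from $0$, provided the first excursion has length at most $r$), one obtains the renewal equation
\[
q(x) \;=\; \psi'(0+)\, W(x) \;+\; q(0)\, A(x),
\qquad A(x) := \mathbb{P}_x(\tau_0^- < \infty,\; \rho - \tau_0^- \le r),
\]
using the classical identity $\mathbb{P}_x(\tau_0^- = \infty) = \psi'(0+) W(x)$. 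Evaluating at $x=0$ determines $q(0) = \psi'(0+) W(0)/(1 - A(0))$, and substituting back shows that \eqref{newformula} is equivalent to the identity
\[
\int_0^\infty W(x+z)\,z\,\mathbb{P}(X_r \in dz)
\;=\; W(x)\int_0^\infty z\,\mathbb{P}(X_r \in dz)
\;+\; A(x)\int_0^\infty W(z)\,z\,\mathbb{P}(X_r \in dz).
\]

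The remaining task is to establish this identity. The plan is to rewrite $A(x)$ by first conditioning on the joint law of $(\tau_0^-, X_{\tau_0^-})$ on $\{\tau_0^- < \infty\}$ (known for SNLP in terms of $W$ and the L\'evy measure) and then applying Kendall's identity
\[
t\,\mathbb{P}(\tau_a^+ \in dt)\,da \;=\; a\,\mathbb{P}(X_t \in da)\,dt \qquad (a,t > 0)
\]
to convert the probability $\mathbb{P}_{-a}(\tau_0^+ \le r) = \mathbb{P}(\tau_a^+ \le r)$ into an integral of densities of $X_t$ for $t \le r$. After an application of Fubini, the martingale identity $\mathbb{E}[W(x + X_r)\mathbf{1}_{\underline{X}_r \ge -x}] = W(x)$ (which follows from the $\mathbb{P}_x$-martingale property of $W(X_t)\mathbf{1}_{t < \tau_0^-}$) should collapse the time-integrated expression into the compact form required.

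The principal obstacle is a uniform handling of both the bounded-variation and unbounded-variation cases. When $X$ has unbounded variation, $W(0) = 0$ and $\tau_0^- = 0$ almost surely under $\mathbb{P}_0$, so the renewal equation degenerates at $x = 0$ and the strong Markov restart at $\rho$ must be interpreted carefully. A clean resolution is to prove the identity first for bounded-variation SNLPs (where all objects are regular and the decomposition into excursions is unambiguous) and then pass to the limit by approximating $X$ via a sequence of bounded-variation spectrally negative L\'evy processes, exploiting continuity of the scale function in the underlying Laplace exponent together with weak convergence of the laws of $X_r$ under such approximations.
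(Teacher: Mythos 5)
Your skeleton coincides with the paper's own: the renewal equation $q(x)=\psi'(0+)W(x)+q(0)A(x)$ is exactly what the paper obtains by applying the strong Markov property at $\tau_0^-$ and at the end of the first excursion, and your key identity
\[
\int_0^\infty W(x+z)\,z\,\mathbb P(X_r\in \mathrm{d}z)=W(x)\int_0^\infty z\,\mathbb P(X_r\in \mathrm{d}z)+A(x)\int_0^\infty W(z)\,z\,\mathbb P(X_r\in \mathrm{d}z)
\]
is precisely the combination of the paper's computation $A(x)=\int_0^\infty[W(x+z)-W(x)]\tfrac zr\,\mathbb P(X_r\in \mathrm{d}z)$ (derived, as you propose, from the law of the overshoot at $\tau_0^-$, Kendall's identity, Fubini and Laplace inversion) with the normalisation $\int_0^\infty W(z)\tfrac zr\,\mathbb P(X_r\in \mathrm{d}z)=1$. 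So the algebraic reduction and the bounded-variation half of your argument are sound and essentially reproduce the paper's proof.

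The genuine gap is in the unbounded-variation case. You correctly observe that $W(0)=0$ makes the determination of $q(0)$ degenerate (indeed then $A(0)=\int_0^\infty W(z)\tfrac zr\,\mathbb P(X_r\in \mathrm{d}z)=1$, so $q(0)=\psi'(0+)W(0)/(1-A(0))$ is $0/0$). But your proposed fix --- approximate $X$ by bounded-variation spectrally negative L\'evy processes $X^{(n)}$ --- only controls the right-hand side of \eqref{newformula} (via convergence of scale functions and of the laws of $X^{(n)}_r$); it does not give you the left-hand side. You would need $\mathbb P_x(\kappa_r^{(n)}<\infty)\to\mathbb P_x(\kappa_r<\infty)$, and the event $\{\kappa_r<\infty\}$ depends on the lengths of \emph{all} excursions of the path below zero; this is not a functional whose convergence follows from weak convergence of marginals or of scale functions, and no argument is offered for it --- yet it is the whole difficulty. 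The paper instead keeps the process fixed and perturbs the \emph{stopping time}: it introduces $\kappa_r^\epsilon$ (counting only excursions that start below $0$ and end before the process returns to level $\epsilon$), runs the identical renewal argument started from $x=\epsilon$ where $W(\epsilon)>0$, and lets $\epsilon\downarrow0$. There the convergence of the probabilities is automatic from the monotonicity $\{\widetilde{\kappa}_r^\epsilon<\infty\}\subset\{\widetilde{\kappa}_r^{\epsilon'}<\infty\}$ for $\epsilon'<\epsilon$ together with $\bigcup_{\epsilon>0}\{\widetilde{\kappa}_r^{\epsilon}<\infty\}=\{\kappa_r<\infty\}$, and the $0/0$ on the right-hand side is resolved by an explicit l'H\^opital computation. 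To complete your route you must either prove the convergence of the Parisian ruin probabilities under your process approximation, or replace it by a perturbation of the stopping time of this kind.
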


We remark that the methodology of the proof of Theorem~\ref{mainresult}
can also be used to derive the Laplace transform of $\kappa_r$.
However, since the proof would be considerably longer and the resulting
expression much harder to (e.g., numerically) evaluate, we stick to
presenting only the Parisian ruin probability.

In the next section we introduce some additional notation and recall
some facts about spectrally negative L\'evy processes, which are needed
for the proof of Theorem~\ref{mainresult} given in Section~\ref
{section_proof}. We conclude by giving some specific examples for which
the Parisian ruin probability can be expressed in a somewhat more
explicit form.

\section{Preliminaries}

We define for $a\in\mathbb R$,
\[
\tau_a^+=\inf\{t>0\dvt  X_t>a\}, \qquad \tau_a^-=\inf\{t>0\dvt X_t<a\}.
\]
Note that since $0$ is regular for the upper half-line for $X$ (cf.~\cite{kypbook}, Section 8.1), we have, by the strong Markov property,
$\tau_a^+=\inf\{t>0\dvt  X_t\geq a\}$.

If $\mathbb E[X_1]>0$ (equivalently, $\psi'(0)>0$), then
%
\begin{equation}
\label{ruinprobability}
\mathbb P_x(\tau_0^-<\infty)= 1-\mathbb E[X_1]W(x);
\end{equation}
cf.~\cite{kypbook}, equation (8.7). We see in particular that, in this
case, $W(x)$ is bounded by $1/\mathbb E[X_1]$.

The scale function $W$ is absolutely continuous on $(0,\infty)$ (cf.
\cite{kypbook}, Lemma 8.2), and we denote a version of its density by
$W'$. Further, $W(0)>0$ if $X$ has paths of bounded variation, and
$W(0)=0$ if $X$ has paths of unbounded variation; cf.~\cite{kypbook}, Lemma 8.6.
We further recall the following well-known expression for the Laplace
transform of the first passage time above $a$:
%
\begin{equation}
\label{passagabove}
\mathbb{E}_x [ \mathrm{e}^{-\theta\tau_a^+} ] = \mathrm
{e}^{\Phi(\theta)(x-a)}, \qquad  x\leq a, \theta\geq0,
\end{equation}
where $\Phi(\theta)=\sup\{\lambda\geq0\dvt \psi(\lambda)=\theta\}$; cf.
\cite{kypbook}, Section 8.1.
We will also use Kendall's identity (cf.~\cite{bertoin}, Corollary
VII.3), which relates the distribution of a spectrally negative
L\'{e}vy process to the distribution of its upward passage time $\tau_z^+$,
%
\begin{equation}\label{kendallsidentity}
r \mathbb P(\tau_z^+\in\mathrm{d} r)\,\mathrm{d} z = z \mathbb P(X_r\in
\mathrm{d} z)\,\mathrm{d} r.
\end{equation}
Last, we need the following three identities.
\begin{lemma}
For $\theta>0$,
\begin{eqnarray}
\label{laplaceovershoot}
\mathbb E_x \bigl[ \mathbf{1}_{\{\tau_0^-<\infty\}} \mathrm{e}^{\Phi
(\theta)X_{\tau_0^-}} \bigr]
&= & \frac{\theta}{\Phi(\theta)} \int_0^\infty\mathrm{e}^{-\Phi(\theta
)y}W'(x+y)\,\mathrm{d}y, \\
\label{lemmapart1}
\int_0^\infty\mathrm{e}^{-\theta r} \int_{y}^\infty\frac{z}{r}
\mathbb P(X_r\in\mathrm{d}z)\,\mathrm{d}r &= & \frac{1}{\Phi(\theta
)}\mathrm{e}^{-\Phi(\theta)y}, \qquad  y\geq0, \\
\label{niceidentity}
\int_0^\infty W(z) \frac{z}{r}\mathbb{P}(X_r\in\mathrm{d}z)& = & 1.
\end{eqnarray}
\end{lemma}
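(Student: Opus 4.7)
My plan is to prove the three identities in the order \eqref{lemmapart1}, \eqref{niceidentity}, \eqref{laplaceovershoot}. The first two are consequences of Kendall's identity for spectrally negative L\'evy processes, while the third requires a Gerber--Shiu type formula for the joint Laplace transform of $(\tau_0^-,X_{\tau_0^-})$.

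The workhorse behind \eqref{lemmapart1} and \eqref{niceidentity} is Kendall's identity, which states that, as measures on $(0,\infty)^2$,
\[
\frac{z}{r}\,\mathbb P(X_r\in\mathrm dz)\,\mathrm dr = \mathbb P(\tau_z^+\in\mathrm dr)\,\mathrm dz.
\]
Granting this, \eqref{lemmapart1} drops out by Fubini and \eqref{passagabove}: its left-hand side equals $\int_y^\infty\int_0^\infty e^{-\theta r}\mathbb P(\tau_z^+\in\mathrm dr)\,\mathrm dz = \int_y^\infty e^{-\Phi(\theta)z}\,\mathrm dz = e^{-\Phi(\theta)y}/\Phi(\theta)$. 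For \eqref{niceidentity} I would multiply by $e^{-\theta r}$ and integrate in $r$: the right-hand side becomes $1/\theta$, and on the left, Kendall and Fubini turn the double integral into $\int_0^\infty W(z)\mathbb E[e^{-\theta\tau_z^+}]\,\mathrm dz = \int_0^\infty W(z)e^{-\Phi(\theta)z}\,\mathrm dz = 1/\psi(\Phi(\theta)) = 1/\theta$ by \eqref{scalefunction}. Laplace-transform uniqueness, together with continuity in $r$ of $\int_0^\infty W(z)\tfrac{z}{r}\mathbb P(X_r\in\mathrm dz)$ (which follows by dominated convergence, since $W$ is bounded by $1/\mathbb E[X_1]$), then upgrades this to the pointwise identity \eqref{niceidentity}.

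For \eqref{laplaceovershoot} I would invoke the standard Gerber--Shiu-type identity
\[
\mathbb E_x\bigl[e^{\beta X_{\tau_0^-}}\mathbf{1}_{\{\tau_0^-<\infty\}}\bigr] = e^{\beta x}\Bigl[1-\psi(\beta)\int_0^x e^{-\beta y}W(y)\,\mathrm dy\Bigr] - \frac{\psi(\beta)}{\beta}W(x),\quad \beta>0,
\]
which is the $q=0$ specialization of the well-known joint Laplace transform of $(\tau_0^-,X_{\tau_0^-})$ expressed via $q$-scale functions (see e.g.\ Kyprianou's book, Chapter~8). Taking $\beta=\Phi(\theta)$, so that $\psi(\beta)=\theta$, and using $\int_0^\infty e^{-\Phi(\theta)y}W(y)\,\mathrm dy=1/\theta$ from \eqref{scalefunction} to replace $\int_0^x$ by $1/\theta-\int_x^\infty$, the right-hand side simplifies to $\theta e^{\Phi(\theta)x}\int_x^\infty e^{-\Phi(\theta)u}W(u)\,\mathrm du - \theta W(x)/\Phi(\theta)$. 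On the other hand, the right-hand side of \eqref{laplaceovershoot}, after the change of variables $u=x+y$, equals $\tfrac{\theta}{\Phi(\theta)}e^{\Phi(\theta)x}\int_x^\infty e^{-\Phi(\theta)u}W'(u)\,\mathrm du$; an integration by parts (the boundary term at infinity vanishing because $W$ is bounded and $\Phi(\theta)>0$, the one at $x$ producing exactly $-e^{-\Phi(\theta)x}W(x)$) reproduces the same expression.

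The main obstacle is really \eqref{laplaceovershoot}: the quoted Gerber--Shiu-type identity is a genuine fluctuation-theoretic input not among the items recalled in the preliminaries, so the paper must either cite it or re-derive it --- for instance by an Esscher change of measure at rate $\Phi(\theta)$ (turning $X$ into an SNLP drifting to $+\infty$) combined with the ruin probability formula \eqref{ruinprobability} under the transformed measure, followed by an analytic continuation in the discount parameter. The identities \eqref{lemmapart1} and \eqref{niceidentity}, by contrast, reduce cleanly to Kendall's identity together with \eqref{passagabove} and \eqref{scalefunction}.
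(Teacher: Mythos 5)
Your proposal is correct and follows essentially the same route as the paper: identities \eqref{lemmapart1} and \eqref{niceidentity} via Kendall's identity, Tonelli, \eqref{passagabove}, \eqref{scalefunction} and Laplace-transform uniqueness (with the same continuity-in-$r$ justification), and \eqref{laplaceovershoot} via exactly the Gerber--Shiu-type identity the paper cites (from Loeffen--Renaud, Equation (35)) followed by the same substitution $\beta=\Phi(\theta)$, change of variables and integration by parts. The only difference is the order of the three identities, which is immaterial.
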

\begin{pf}
Using the known identity (cf.~\cite{loeffenrenaud}, equation (35))
\[
\label{laplruinovershoot}
\mathbb{E}_x \bigl[ \mathrm{e}^{p X_{\tau_0^-}} \mathbf{1}_{\{\tau
_0^-<\infty\}} \bigr] \\
= \mathrm{e}^{px} -\psi(p)\mathrm{e}^{px}\int_0^x \mathrm
{e}^{-pz}W(z)\,\mathrm{d}z - \frac{\psi(p)}{p}W(x), \qquad  p>0,
\]
by \eqref{scalefunction} and a change of variables and an integration
by parts, we get
\begin{eqnarray*}
\mathbb{E}_x \bigl[ \mathrm{e}^{\Phi(\theta)X_{\tau_0^-}} \mathbf{1}_{\{
\tau_0^-<\infty\}} \bigr]& = &
\mathrm{e}^{\Phi(\theta)x}\biggl ( 1-\theta\int_0^x \mathrm{e}^{-\Phi
(\theta)z}W(z)\,\mathrm{d}z \biggr) - \frac{\theta}{\Phi(\theta)}W(x) \\
&= & \mathrm{e}^{\Phi(\theta)x} \theta\int_x^\infty\mathrm{e}^{-\Phi
(\theta)z}W(z)\,\mathrm{d}z - \frac{\theta}{\Phi(\theta)}W(x) \\
& = & \theta\int_0^\infty\mathrm{e}^{-\Phi(\theta)y }W(x+y)\,\mathrm{d}y
- \frac{\theta}{\Phi(\theta)}W(x) \\
&= & \frac{\theta}{\Phi(\theta)} \int_0^\infty\mathrm{e}^{-\Phi(\theta
)y }W'(x+y)\,\mathrm{d}y.
\end{eqnarray*}
This proves the first identity. For the second, we have, by Kendall's
identity \eqref{kendallsidentity}, Tonelli and \eqref{passagabove},
\begin{eqnarray*}
\int_0^\infty\mathrm{e}^{-\theta r} \int_y^\infty \frac{z}{r}\mathbb
{P}(X_r\in\mathrm{d}z) \,\mathrm{d}r
&= & \int_0^\infty\mathrm{e}^{-\theta r} \int_y^\infty \mathbb{P}(\tau
_z^+\in\mathrm{d}r)\,\mathrm{d}z \\
&= & \int_y^\infty\mathrm{e}^{-\Phi(\theta)z}\,\mathrm{d}z = \frac{1}{\Phi
(\theta)}\mathrm{e}^{-\Phi(\theta)y}.
\end{eqnarray*}
In order to prove the last identity, we use again Kendall's identity
\eqref{kendallsidentity}, Tonelli and \eqref{passagabove} combined with
\eqref{scalefunction} to get
\begin{eqnarray*}
\int_0^\infty\mathrm{e}^{-\theta r} \int_0^\infty W(z) \frac
{z}{r}\mathbb{P}(X_r\in\mathrm{d}z) \,\mathrm{d}r
&= & \int_0^\infty\mathrm{e}^{-\theta r} \int_0^\infty W(z) \mathbb
P(\tau_z^+\in\mathrm{d}r)\,\mathrm{d}z \\
&= & \int_0^\infty\mathrm{e}^{-\Phi(\theta)z} W(z)\,\mathrm{d}z
= \frac{1}{\theta}.
\end{eqnarray*}
Hence \eqref{niceidentity} follows by Laplace inversion (note that the
left-hand side of \eqref{niceidentity} is continuous in $r$ because a
L\'evy process is continuous in probability, and $W$ is bounded and
continuous). Note that \eqref{niceidentity} can also directly be
deduced from~\cite{bertoin}, Corollary VII.16.
\end{pf}

\section{\texorpdfstring{Proof of Theorem \protect\ref{mainresult}}{Proof of Theorem 1}}\label{section_proof}

In the case where $X$ has paths of unbounded variation, we will use a
limiting argument in the proof. For this reason we introduce for
$\varepsilon\geq0$ the stopping time $\kappa_r^\varepsilon$, which is defined by
\[
\kappa_r^\varepsilon=\inf\{ t>r\dvt  t-g^\varepsilon_t>r, X_{t-r}<0 \},
\qquad\mbox{where $g^\varepsilon_t=\sup\{0\leq s\leq t\dvt X_s\geq\varepsilon\}$.}
\]
The stopping time $\kappa_r^\varepsilon$ is the first time that an
excursion starting when $X$ gets below zero, ending before $X$ gets
back up to $\varepsilon$ and having length greater than $r$, has occurred.
Note that $\kappa_r^0=\kappa_r$. We have for $x<0$, by the strong
Markov property and the absence of upward jumps,
%
\begin{eqnarray}
\label{ruinbelow0}
\mathbb P_x(\kappa_r^\varepsilon<\infty)& = & \mathbb P_x (\tau_\varepsilon
^+>r) + \mathbb{E}_x [ \mathbb P_x ( \tau_\varepsilon^+\leq
r,\kappa_r^\varepsilon<\infty|\mathcal{F}_{\tau_\varepsilon^+} )
] \nonumber\\
&= & \mathbb P_x (\tau_\varepsilon^+>r) + \mathbb P_x(\tau_\varepsilon^+\leq
r)\mathbb P_\varepsilon(\kappa_r^\varepsilon<\infty) \\
&= & 1 - \mathbb P_x(\tau_\varepsilon^+\leq r) \bigl( 1-\mathbb P_\varepsilon
(\kappa_r^\varepsilon<\infty) \bigr).\nonumber
\end{eqnarray}
Using the above we have, for $x\geq0$, again by the strong Markov property,
%
\begin{eqnarray}
\label{ruinabove0}
\mathbb P_x(\kappa_r^\varepsilon<\infty) &= & \mathbb E_x [ \mathbb
P_x ( \kappa_r^\varepsilon<\infty|\mathcal{F}_{\tau_0^-} )
] \nonumber\\
&= & \mathbb E_x \bigl[ \mathbf{1}_{\{\tau_0^-<\infty\}} \mathbb
P_{X_{\tau_0^-}} ( \kappa_r^\varepsilon<\infty) \bigr] \\
&= & \mathbb P_x(\tau_0^-<\infty) -\bigl( 1-\mathbb P_\varepsilon(\kappa
_r^\varepsilon<\infty) \bigr) \mathbb E_x \bigl[ \mathbf{1}_{\{\tau
_0^-<\infty\}} \mathbb P_{X_{\tau_0^-}} (\tau_\varepsilon^+\leq r)
\bigr].\nonumber
\end{eqnarray}

We proceed by finding an expression in terms of the scale function $W$
and the law of $X_r$ for the expectation in the right-hand side of
\eqref{ruinabove0}.
By spatial homogeneity, we can write, for $x\geq0$,
\begin{eqnarray*}
\mathbb E_x \bigl[ \mathbf{1}_{\{\tau_0^-<\infty\}} \mathbb P_{X_{\tau
_0^-}} (\tau_\varepsilon^+\leq r) \bigr]& = &
\int_{[0, \infty)} \mathbb E_x \bigl[ \mathbf{1}_{\{\tau_0^-<\infty,
-X_{\tau_0^-}\in\mathrm{d}z \}} \mathbb P_{-z} (\tau_\varepsilon^+\leq r)
\bigr] \\
&= & \int_{[0,\infty)} \mathbb E_x \bigl[ \mathbf{1}_{\{\tau_0^-<\infty
, -X_{\tau_0^-}\in\mathrm{d}z \}} \mathbb P (\tau_{\varepsilon+z}^+\leq r)
\bigr].
\end{eqnarray*}
Since we have for $\theta,z>0$ by an integration by parts and \eqref
{passagabove},
\[
\int_0^\infty\mathrm{e}^{-\theta r} \mathbb P (\tau_{\varepsilon+z}^+\leq
r)\,\mathrm{d}r =\frac{1}{\theta}\mathbb E [ \mathrm{e}^{-\theta\tau
_{\varepsilon+z}^+} ] = \frac{1}{\theta}\mathrm{e}^{-\Phi(\theta
)(z+\varepsilon)},
\]
we deduce in combination with Tonelli, \eqref{laplaceovershoot} and
\eqref{lemmapart1}, that for $\theta>0$ and $x\geq0$,
%
\begin{eqnarray}
\label{laplace_difficultterm}
&&\int_0^\infty\mathrm{e}^{-\theta r} \mathbb E_x\bigl [ \mathbf{1}_{\{
\tau_0^-<\infty\}} \mathbb P_{X_{\tau_0^-}} (\tau_\varepsilon^+\leq r)
\bigr] \,\mathrm{d}r \nonumber\\
&&\quad=
\frac{1}{\theta}\mathrm{e}^{-\Phi(\theta)\varepsilon} \int_0^\infty
\mathbb E_x \bigl[ \mathbf{1}_{\{\tau_0^-<\infty\}} \mathrm{e}^{\Phi
(\theta)X_{\tau_0^-}} \bigr]
\nonumber
\\[-8pt]
\\[-8pt]
\nonumber
&&\quad=  \int_0^\infty\frac{ \mathrm{e}^{-\Phi(\theta)(y+\varepsilon)} }{\Phi
(\theta)} W'(x+y)\,\mathrm{d}y \\
&&\quad=  \int_0^\infty W'(x+y) \int_0^\infty\mathrm{e}^{-\theta r} \int
_{y+\varepsilon}^\infty\frac{z}{r} \mathbb P(X_r\in\mathrm{d}z)\,\mathrm
{d}r \,\mathrm{d}y.\nonumber
\end{eqnarray}
Hence by Tonelli and Laplace inversion (noting that both sides of the
equation below are right-continuous in $r$), for $x\geq0$,
%
\begin{eqnarray}
\label{keycomputation}
\mathbb E_x \bigl[ \mathbf{1}_{\{\tau_0^-<\infty\}} \mathbb P_{X_{\tau
_0^-}} (\tau_\varepsilon^+\leq r) \bigr]
&= & \int_\varepsilon^\infty\int_0^{z-\varepsilon} W'(x+y)\,\mathrm{d}y \frac
{z}{r} \mathbb P(X_r\in\mathrm{d}z)
\nonumber
\\[-8pt]
\\[-8pt]
\nonumber
&= & \int_\varepsilon^\infty[ W(x+z-\varepsilon)-W(x) ] \frac{z}{r} \mathbb
P(X_r\in\mathrm{d}z).
\end{eqnarray}


The next step is to prove \eqref{newformula} for $x=0$. For this we
split the analysis into two cases, $W(0)>0$ and $W(0)=0$. First we
consider the case $W(0)>0$ (or equivalently $X$ has paths of bounded
variation). Then by \eqref{ruinprobability}, $\mathbb P (\tau_0^-<\infty
)<1$ and thus $\mathbb E [ \mathbf{1}_{\{\tau_0^-<\infty\}}
\mathbb P_{X_{\tau_0^-}} (\tau_0^+\leq r) ]<1$. Using \eqref
{ruinabove0} and~\eqref{keycomputation} with $x=\varepsilon=0$ and~\eqref
{ruinprobability} and \eqref{niceidentity}, we get
\begin{eqnarray*}
\mathbb P(\kappa_r<\infty) &= & \frac{\mathbb P(\tau_0^-<\infty) - \int
_0^\infty[ W(z)-W(0) ] ({z}/{r}) \mathbb P(X_r\in\mathrm{d}z) }{ 1 -
\int_0^\infty[ W(z)-W(0) ] ({z}/{r}) \mathbb P(X_r\in\mathrm{d}z) }
\\
&= & \frac{-W(0)\mathbb E[X_1] + W(0)\int_0^\infty ({z}/{r}) \mathbb
P(X_r\in\mathrm{d}z) }{ W(0) \int_0^\infty({z}/{r}) \mathbb P(X_r\in
\mathrm{d}z) } \\
&= & \frac{\int_0^\infty ({z}/{r}) \mathbb P(X_r\in\mathrm{d}z)
-\mathbb E[X_1] }{ \int_0^\infty({z}/{r}) \mathbb P(X_r\in\mathrm
{d}z) }.
\end{eqnarray*}

Now we deal with the second case, that is, we assume $W(0)=0$. Let
$\varepsilon>0$. Then $\mathbb P_\varepsilon(\tau_0^-<\infty)<1$ (cf. \eqref
{ruinprobability}) and consequently using \eqref{ruinabove0} and \eqref
{keycomputation} with $x=\varepsilon$ and \eqref{ruinprobability}, we deduce
%
\begin{eqnarray}
\label{beforelimit}
&&\mathbb P_\varepsilon(\kappa_r^\varepsilon<\infty) \nonumber\\
&&\quad=  \frac{\mathbb
P_\varepsilon(\tau_0^-<\infty) - \int_\varepsilon^\infty[ W(z)-W(\varepsilon) ]
({z}/{r}) \mathbb P(X_r\in\mathrm{d}z) }{ 1 - \int_\varepsilon^\infty[
W(z)-W(\varepsilon) ] ({z}/{r}) \mathbb P(X_r\in\mathrm{d}z) }
\\
&&\quad=  \frac{ 1/W(\varepsilon) -\mathbb E[X_1] +\int_\varepsilon^\infty
(z/r)\mathbb P(X_r\in\mathrm{d}z) - \int_{\varepsilon}^\infty
({W(z)}/{W(\varepsilon)})  (z/r)\mathbb P(X_r\in\mathrm{d}z) } {1/W(\varepsilon
) +\int_\varepsilon^\infty (z/r)\mathbb P(X_r\in\mathrm{d}z) - \int
_{\varepsilon}^\infty({W(z)}/{W(\varepsilon)})  (z/r)\mathbb P(X_r\in
\mathrm{d}z) }.\nonumber
\end{eqnarray}
We now want to compute the limit as $\varepsilon\downarrow0$ of both
sides of \eqref{beforelimit}. To this end, recalling that $0<\mathbb
E[X_1]<\infty$, we have by \eqref{niceidentity}, an integration by
parts and l'H\^opital,
%
\begin{eqnarray}
\label{limitrhs}
&&\lim_{\varepsilon\downarrow0} \frac{1-\int_\varepsilon^\infty W(z)
(z/r)\mathbb P(X_r\in\mathrm{d}z) }{W(\varepsilon)}\nonumber\\
&&\quad =  \lim_{\varepsilon
\downarrow0} \int_0^\varepsilon\frac{W(z)}{W(\varepsilon)}  (z/r)\mathbb
P(X_r\in\mathrm{d}z)
\nonumber
\\[-8pt]
\\[-8pt]
\nonumber
&&\quad=  \lim_{\varepsilon\downarrow0} \biggl( \int_0^\varepsilon\frac yr\mathbb
P(X_r\in\mathrm{d}y) - \int_0^\varepsilon\frac{W'(z)}{W(\varepsilon)} \int
_0^z \frac yr\mathbb P(X_r\in\mathrm{d}y) \,\mathrm{d}z \biggr) \\
&&\quad=  0 - \lim_{\varepsilon\downarrow0} \frac{W'(\varepsilon) \int_0^\varepsilon
 (y/r)\mathbb P(X_r\in\mathrm{d}y) }{W'(\varepsilon)} =0.\nonumber
\end{eqnarray}
For the limit as $\varepsilon\downarrow0$ of the left-hand side of \eqref
{beforelimit}, we introduce, for $\varepsilon>0$, the stopping time
\[
\widetilde{\kappa}^\varepsilon_r=\inf\{ t>r\dvt  t-g_t>r, X_{t-r}<-\varepsilon \}
, \qquad \mbox{where $g_t=\sup\{0\leq s\leq t\dvt X_s\geq0\}$.}
\]
We easily see that for $0<\varepsilon'<\varepsilon$, $\{\widetilde{\kappa
}_r^\varepsilon<\infty\} \subset\{\widetilde{\kappa}_r^{\varepsilon'}<\infty
\}$ and $\bigcup_{\varepsilon>0}\{\widetilde{\kappa}_r^{\varepsilon}<\infty\}=\{
\kappa_r<\infty\}$.
Hence, by spatial homogeneity,
\[
\lim_{\varepsilon\downarrow0} \mathbb P_\varepsilon(\kappa_r^\varepsilon<\infty)
= \lim_{\varepsilon\downarrow0} \mathbb P(\widetilde\kappa_r^\varepsilon
<\infty) = \mathbb P(\kappa_r<\infty),
\]
and, combined with \eqref{beforelimit} and \eqref{limitrhs}, this leads to
%
\begin{equation}
\label{parisianruin0}
\mathbb P(\kappa_r<\infty) = \frac{\int_0^\infty ({z}/{r}) \mathbb
P(X_r\in\mathrm{d}z) -\mathbb E[X_1] }{ \int_0^\infty({z}/{r})
\mathbb P(X_r\in\mathrm{d}z) }.
\end{equation}

Hence, recalling \eqref{niceidentity}, we have shown in both cases that
\eqref{newformula} holds for $x=0$. Now plugging \eqref
{ruinprobability} and the above expression for $\mathbb P(\kappa
_r<\infty)$ into \eqref{ruinabove0} and using \eqref{keycomputation}
with $\varepsilon=0$, we arrive at \eqref{newformula} for any $x\geq0$.

We now prove that \eqref{newformula} also holds for $x<0$.
For $x<0$, setting $\varepsilon=0$ in \eqref{ruinbelow0} and applying
\eqref{newformula} (with $x=0$) in combination with \eqref
{niceidentity}, we get
%
\[
\mathbb{P}_x (\kappa_r<\infty) = 1 - \mathbb E[X_1]\frac{r\mathbb
P_x(\tau_{0}^+\leq r) }{ \int_0^\infty z\mathbb{P}(X_r\in\mathrm{d}z)}.
\]
The proof will be completed once we show that for $x<0$
\[
\mathbb P_x(\tau_{0}^+\leq r)=\frac{1}{r}\int_0^\infty W(x+z) z\mathbb
{P}(X_r\in\mathrm{d}z).
\]
The above identity follows by showing that the
Laplace transforms in $r$ of both sides are equal. To this end, note
that by an integration by parts and \eqref{passagabove}, the Laplace
transform of the left-hand side is equal to
\[
\int_0^\infty\mathrm{e}^{-\theta r} \mathbb P_x(\tau_{0}^+\leq
r)\,\mathrm{d}r = \frac{1}{\theta}\mathbb E [ \mathrm e^{-\theta\tau
_{-x}^+} ] =\frac{1}{\theta} \mathrm{e}^{\Phi(\theta)x},
\]
and by Tonelli, Kendall's identity \eqref{kendallsidentity}, \eqref
{passagabove} and \eqref{scalefunction}, the Laplace transform of the
left-hand side equals
\begin{eqnarray*}
\int_0^\infty\mathrm e^{-\theta r}\frac{1}{r}\int_0^\infty W(x+z)
z\mathbb{P}(X_r\in\mathrm{d}z)\,\mathrm{d}r
&= & \int_{0}^\infty W(x+z)\int_0^\infty\mathrm e^{-\theta r} \mathbb
{P}(\tau_z^+\in\mathrm{d} r)\,\mathrm{d}z \\
&= & \int_{0}^\infty W(x+z) \mathrm e^{-\Phi(\theta)z}\, \mathrm dz \\
&= & \frac{1}{\psi(\Phi(\theta))} \mathrm e^{\Phi(\theta)x} \\
&= & \frac{1}{\theta} \mathrm{e}^{\Phi(\theta)x}.
\end{eqnarray*}
Note that in the third equality we used that $x<0$.

\section{Examples}

If one wants to evaluate formula \eqref{newformula}, one needs to know
the scale function $W$ and the distribution of $X_r$. There are plenty
of examples of spectrally negative L\'evy processes for which the
distribution at a fixed time has a closed-form expression and, thanks
to recent developments (for the latest we refer to Section 3 of Chazal
\textit{et al.}~\cite{chazalkyprianoupatie}), there are also a lot of
examples for which the scale function is known explicitly.
Unfortunately, it seems that there are only a few examples for which
both of them are known in closed-form. Below we give three examples for
which this is the case and for which we give the corresponding
expression for the Parisian ruin probability. In the other case, one
can resort to numerically inverting the Laplace transform (which recall
is explicitly given in terms of $\psi$) of one or both of the two quantities.

\subsection{Brownian motion}

Let $X_t=\mu t+\sigma B_t$, with $\mu,\sigma>0$ and $\{B_t,t\geq0\}$ a
Brownian motion. Then $\psi(\theta)=\mu\theta+\frac12\sigma^2\theta^2$,
and we easily deduce from \eqref{scalefunction} that
\[
W(x)=\frac{1}{\mu} \bigl( 1-\mathrm{e}^{-2({\mu}/{\sigma^2}) x}
\bigr), \qquad  x\geq0 . 
\]
Hence by \eqref{newformula} for $x\geq0$,
\[
\mathbb{P}_x (\tau_r<\infty)
=  \mathrm{e}^{-2({\mu}/{\sigma^2})x} \frac{\int_0^\infty \mathrm
{e}^{-2({\mu}/{\sigma^2}) z} z\mathbb{P}(X_r\in\mathrm{d}z) }{ \int
_0^\infty z\mathbb{P}(X_r\in\mathrm{d}z)}.
\]
Setting $x=0$ in above formula, comparing with \eqref{parisianruin0}
and realizing that $\mathbb E[X_r]=r\mathbb E[X_1]=\mu r$, we see that
\[
\int_0^\infty \mathrm{e}^{-2({\mu}/{\sigma^2}) z} z\mathbb{P}(X_r\in
\mathrm{d}z) = \int_0^\infty z\mathbb{P}(X_r\in\mathrm{d}z)-\mu r.
\]
Noting that $X_r$ has a normal distribution with mean $\mu r$ and
variance $\sigma^2 r$ and making the change of variables $y=\frac{z-\mu
r}{\sigma\sqrt{r}}$, we get
\begin{eqnarray*}
\int_0^\infty z \mathbb{P}(X_r\in\mathrm{d}z) &= & \frac{1}{\sqrt{2\uppi
\sigma^2 r}}\int_0^\infty z\mathrm{e}^{-{(z-\mu r)^2}/(2\sigma^2
r)}\,\mathrm{d}z \\
&= & \frac{\sigma\sqrt{r}}{\sqrt{2\uppi}} \int_{-{\mu\sqrt r}/{\sigma
}}^\infty y\mathrm{e}^{-{y^2}/{2} }\,\mathrm{d}y + \frac{\mu r}{\sqrt
{2\uppi}} \int_{-{\mu\sqrt r}/{\sigma}}^\infty\mathrm{e}^{-
{y^2}/{2} }\,\mathrm{d}y \\
&= & \frac{\sigma\sqrt{r}}{\sqrt{2\uppi}}\mathrm{e}^{-{\mu^2r}/({2\sigma
^2})} + \mu r \mathcal N \biggl( \frac{\mu\sqrt r}{\sigma} \biggr),
\end{eqnarray*}
where $\mathcal{N}$ is the cumulative distribution function of a
standard normal random variable.
Putting everything together results in
\begin{eqnarray*}
\mathbb{P}_x (\tau_r<\infty) &=& \mathrm{e}^{-2({\mu}/{\sigma^2})x}
\biggl( \frac{\sigma\sqrt{r}}{\sqrt{2\uppi}}\mathrm{e}^{-{\mu
^2r}/(2\sigma^2)} - \mu r\mathcal N \biggl( -\frac{\mu\sqrt r}{\sigma}
\biggr) \biggr)\\
&&{}\Big/\biggl(\frac{\sigma\sqrt{r}}{\sqrt{2\uppi}}\mathrm{e}^{-{\mu
^2r}/(2\sigma^2)} + \mu r\mathcal N \biggl( \frac{\mu\sqrt r}{\sigma}
\biggr)\biggr),  \qquad  x\geq0,
\end{eqnarray*}
which agrees with the formula found by Dassios and Wu~\cite{dassioswu}.

\subsection{Classical risk process with exponential claims}

Let $X_t=ct-\sum_{i=1}^{N_t}C_i$, where $\{N_t,t\geq0\}$ is a Poisson
process with rate $\eta$ and $C_1,C_2,\ldots$ are i.i.d. exponentially
distributed random variables with parameter $\alpha$ independent of
$N_t$. Its Laplace exponent is given by $\psi(\theta)=c\theta-\eta+\eta
\frac{\alpha}{\theta+\alpha}$. Assume that $c>\eta/\alpha$, so that
$\mathbb E[X_1]=\psi'(0)=c-\eta/\alpha>0$. From \eqref{scalefunction}
it easily follows that
\[
W(x)= \frac{1}{c-\eta/\alpha} \biggl( 1- \frac{\eta}{c\alpha}\mathrm
{e}^{({\eta}/{c}-\alpha)x} \biggr), \qquad  x\geq0.
\]
Recalling that a sum of i.i.d. exponential random variables equals a
gamma random variable and utilizing the independence between $C_i$ and
$N_t$, we get
\begin{eqnarray*}
\mathbb{P} \Biggl( \sum_{i=1}^{N_r}C_i\in\mathrm{d}y \Biggr)& = & \sum
_{k=0}^\infty\mathbb P \Biggl( \sum_{i=0}^k C_i\in\mathrm{d}y \Biggr)
\mathbb P(N_r=k) \\
&= & \mathrm{e}^{-\eta r}\delta_0(\mathrm{d}y) + \sum_{k=1}^\infty\alpha
^k\frac{y^{k-1}\mathrm{e}^{-\alpha y}}{(k-1)!}\frac{(\eta
r)^k}{k!}\mathrm{e}^{-\eta r} \,\mathrm{d}y \\
&= & \mathrm{e}^{-\eta r} \Biggl( \delta_0(\mathrm{d}y) + \mathrm
{e}^{-\alpha y}\sum_{m=0}^\infty\frac{ (\alpha\eta r)^{m+1}}{m!(m+1)!}
y^{m} \,\mathrm{d}y \Biggr),
\end{eqnarray*}
where $\delta_0(\mathrm{d}y)$ is the Dirac mass at $0$. It follows that
\begin{eqnarray*}
\int_0^\infty z\mathbb{P}(X_r\in\mathrm{d}z) &= & \int_0^{cr} z\mathrm
{e}^{-\eta r} \Biggl( \delta_0(cr-\mathrm{d}z) + \mathrm{e}^{-\alpha
(cr-z)}\sum_{m=0}^\infty\frac{ (\alpha\eta r)^{m+1}}{m!(m+1)!}
(cr-z)^{m} \,\mathrm{d}z \Biggr) \\
&= & \mathrm{e}^{-\eta r} \Biggl( cr + \sum_{m=0}^\infty\frac{ (\alpha
\eta r)^{m+1}}{m!(m+1)!} \int_0^{cr} \mathrm{e}^{-\alpha y} (cr-y)y^{m}
\,\mathrm{d}y \Biggr) \\
&= & \mathrm{e}^{-\eta r} \Biggl( cr + \sum_{m=0}^\infty\frac{ (\eta
r)^{m+1}}{m!(m+1)!} \biggl[ cr\Gamma(m+1,cr\alpha) -\frac1\alpha\Gamma
(m+2,cr\alpha) \biggr] \Biggr),
\end{eqnarray*}
where $\Gamma(a,x)=\int_0^x \mathrm{e}^{-t}t^{a-1}\,\mathrm{d}t$ is the
incomplete gamma function. Putting everything together and using the
same trick as in the previous example leads, for $x\geq0$, to
\begin{eqnarray*}
&&\mathbb{P}_x(\kappa_r<\infty)\\
&&\quad=  \mathrm{e}^{({\eta}/{c}-\alpha)x}
\frac{\int_0^\infty z\mathbb{P}(X_r\in\mathrm{d}z)-(c-\eta/\alpha)
r}{\int_0^\infty z\mathbb{P}(X_r\in\mathrm{d}z)} \\
&&\quad=  \mathrm{e}^{({\eta}/{c}-\alpha)x} \Biggl( 1- {\mathrm
{e}^{\eta r}(c-\eta/\alpha) }\\
&&\hspace*{62pt}\qquad{}\big/ \Biggl(c + \sum_{m=0}^\infty\frac{ (\eta
r)^{m+1}}{m!(m+1)!} \biggl[ c \Gamma(m+1,cr\alpha) -\frac1{\alpha r}
\Gamma(m+2,cr\alpha) \biggr] \Biggr) \Biggr).
\end{eqnarray*}
Although it does not seem easy to show directly that the above
expression is equal to the one found by Dassios and Wu \cite
{dassioswu}, one can check numerically that the two expressions match.

\subsection{Stable process with index $3/2$}

Let $X_t=ct+Z_t$, where $c>0$ and $\{Z_t,t\geq0\}$ is a spectrally
negative $\alpha$-stable process with $\alpha=3/2$.
The Laplace exponent of $X$ is given by $\psi(\theta)=c\theta+ \theta
^{3/2}$. One can, in a straightforward way, check via \eqref
{scalefunction} that
\[
W(x)=\frac1c\bigl[1-E_{1/2}\bigl(-c\sqrt{x}\bigr)\bigr], \qquad  x\geq0,
\]
where $E_{1/2}(z)$ is the Mittag--Leffler function of order 1/2, that
is, $E_{1/2}(z)=\break\sum_{k=0}^\infty\frac{z^k}{\Gamma((1/2)k+1)}$.
It follows that, for $x\geq0$,
\[
\mathbb{P}_x(\kappa_r<\infty)= \frac{\int_{0}^\infty E_{1/2}(-c\sqrt
{x+z}) z \mathbb P(Z_r\in\mathrm{d}z-cr)}{\int_{0}^\infty z \mathbb
P(Z_r\in\mathrm{d}z-cr)}.
\]
Using the scaling property of stable processes (see, e.g.,~\cite{bertoin}, Section VIII.1) and the expression for the density of the
underlying stable distribution taken from Schneider~\cite{schneider}, equations (3.4) and~(3.5), we get the following expression
for the distribution of $Z_r$:
\[
\mathbb P(Z_r\in\mathrm{d}y)= \mathbb P(r^{2/3}Z_1\in\mathrm{d}y) =
\cases{
\displaystyle\sqrt{\frac3{\uppi}} r^{2/3} y^{-1} \mathrm{e}^{-u/2}W_{1/2,1/6}(u)\,\mathrm
{d}y, & \quad $y>0,$ \vspace*{2pt}\cr
\displaystyle-\frac{1}{2\sqrt{3\uppi}}r^{2/3} y^{-1} \mathrm
{e}^{u/2}W_{-1/2,1/6}(u)\,\mathrm{d}y, &\quad $ y<0,$}
\]
where $u=\frac{4}{27}r^{9/2}|y|^3$ and $W_{\kappa,\mu}(z)$ is
Whittaker's W-function; see, for example,~\cite{lebedev}, Section~9.13.

\section*{Acknowledgement}
The authors are indebted to an anonymous referee for pointing out that
\eqref{newformula} holds for $x<0$, as well.


\printhistory

\end{document}